\newcommand{\n}{\noindent}
\newcommand{\vp}{\varepsilon}
\newcommand{\bb}[1]{\mathbb{#1}}
\newcommand{\cl}[1]{\mathcal{#1}}
\theoremstyle{plain}
\newtheorem{thm}{Theorem}[section]
\newtheorem{lem}[thm]{Lemma}
\theoremstyle{definition}
\theoremstyle{remark}
\newtheorem{rem}[thm]{Remark}
\numberwithin{equation}{section}
\def\CC{\bb C}
\def\E{\bb E}
\def\P{\bb P}
\def\CC{\bb C}
\begin{document}

\title{Quantum Expanders and Geometry of Operator Spaces II}

\author{by\\
Gilles  Pisier\\
Texas A\&M University\\
College Station, TX 77843, U. S. A.\\
and\\
Universit\'e Paris VI\\
IMJ, Equipe d'Analyse Fonctionnelle, Case 186,\\ 75252
Paris Cedex 05, France}

 \maketitle

         \bigskip

         \def\o{\overline}
            \def\tr{{\rm tr }}
        In this appendix to \cite{Pq} we give a quick proof of an inequality that can be substituted to Hastings's
        result from \cite{Ha}, quoted  as  Lemma 1.9 in  \cite{Pq}. Our inequality is less sharp but also
        appears to apply with more general (and even matricial) coefficients. It shows that up to a universal constant
        all moments of the norm of a linear combination of  the form $$S=\sum\nolimits_j   a_j  U_j \otimes \bar U_j (1-P)$$ are dominated
        by those of  the corresponding Gaussian sum
        $$S'=\sum\nolimits_j  a_j Y_j \otimes \bar Y'_j .$$
        The advantage is that  $S'$ is now simply separately  a Gaussian random variable
        with respect to the independent Gaussian random  matrices $(Y_j)$ and $(Y'_j)$.\\
        We recall that we denote by $P$
        the orthogonal projection onto the orthogonal of the identity. 
        Also recall we denote by $S_2^N$ the space $M_N$ equipped with the Hilbert-Schmidt norm ($S_2^N$
        can also be naturally  identified with $\ell_2^N \otimes_2 \o{\ell_2^N}$).
         We will view elements
        of the form $\sum x_j \otimes \bar y_j$ with $x_j,y_j\in M_N$
        as linear operators acting on $S_2^N$  as follows
        $$T(\xi)= \sum\nolimits_j x_j\xi y_j^*,$$
        so that      \begin{equation}\label{i34}\|\sum x_j \otimes \bar y_j\|=\|T\|_{B(S_2^N)}.\end{equation}
        
        We denote by $(U_j)$   a sequence of i.i.d. random $N\times N$-matrices uniformly distributed over the unitary group $U(N)$.
        We will denote by  $(Y_j)$ a sequence of i.i.d. Gaussian random $N\times N$-matrices, more precisely each $Y_j$ is distributed like the variable $Y$
        that is such that  $\{Y(i,j) N^{1/2}\}$ is a standard family of $N^2$ independent complex Gaussian variables with mean zero and variance 1.
        In other words $Y(i,j) =(2N)^{-1/2}(g_{ij} + \sqrt{-1} g'_{ij})$ where  $g_{ij} ,g'_{ij} $ are independent Gaussian normal $N(0,1)$ random variables.
        
        We denote by $(Y'_j)$ an independent copy of $(Y_j)$.
        
        We will denote by $\|.\|_q$ the Schatten $q$-norm ($1\le q\le \infty$), i.e.
        $\|x\|_q=(\tr(|x|^q))^{1/q}$, with the usual convention that
        for $q=\infty$ this is the operator norm.
        \begin{lem} There is an absolute constant $C$
        such that for any $p\ge 1$ we have for any scalar sequence $(a_j)$ and any $1\le q\le \infty$
        $$\E\|\sum_1^n  a_j U_j \otimes \bar U_j (1-P)\|_q^p \le C^p \E \|\sum_1^n a_j Y_j \otimes \bar Y'_j \|_q^p,$$
        (in fact this holds for all $k$ and all matrices $a_j\in M_k$
        with $   a_j\otimes $ in place of $a_j$).
        \end{lem}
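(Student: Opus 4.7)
The plan is to reduce the inequality to a standard decoupling of Gaussian chaos, via an exact linear-algebraic identity that expresses the centered coupled unitary summand as a conditional expectation of a centered Gaussian chaos. Throughout I write $e = \E[U_j\otimes\bar U_j]$, which is the orthogonal projection of $S_2^N$ onto $\CC I$; note that $(U_j\otimes\bar U_j)(1-P) = U_j\otimes\bar U_j - e$ is the centered summand.

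The key calculation is a polar-decomposition identity. Writing $Y_j = U_j|Y_j|$, rotational invariance of the complex Gaussian density makes $U_j$ Haar on $U(N)$ and independent of $|Y_j|$. Since the law of $|Y_j|$ is unitarily invariant, Schur's lemma applied to the orthogonal decomposition of $S_2^N$ into the trivial representation $\CC I$ and its complement (the adjoint $U(N)$-representation) forces
\[
\E\bigl[\,|Y_j|\,\xi\,|Y_j|\,\bigr] = \alpha_N\,\xi + (1-\alpha_N)\,\tfrac{1}{N}\tr(\xi)\,I,
\]
with $\alpha_N > 0$ uniformly in $N\ge 2$ (the $N=1$ case is trivial since $1-P=0$; for $N\ge 2$, $\alpha_N\|\xi\|_2^2 = \E\|\,|Y_j|^{1/2}\xi\,|Y_j|^{1/2}\,\|_2^2 > 0$ by almost sure invertibility of $Y_j$). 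Combining this with $Y_j\otimes\bar Y_j = (U_j\otimes\bar U_j)(|Y_j|\otimes\overline{|Y_j|})$ and $(U_j\otimes\bar U_j)\,e = e$ (since conjugation fixes $\CC I$), a short rearrangement yields
\[
\E\bigl[\,Y_j\otimes\bar Y_j - e\,\big|\,U_j\,\bigr] = \alpha_N\,(U_j\otimes\bar U_j)(1-P).
\]

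Summing over $j$ and applying the conditional Jensen inequality to the convex functional $\|\,\cdot\,\|_q^p$, followed by the outer expectation, I obtain
\[
\E\bigl\|\textstyle\sum_j a_j(U_j\otimes\bar U_j)(1-P)\bigr\|_q^p \;\le\; \alpha_N^{-p}\,\E\bigl\|\textstyle\sum_j a_j(Y_j\otimes\bar Y_j - e)\bigr\|_q^p.
\]
The remaining ingredient is the classical decoupling of centered quadratic forms in iid Gaussian matrices (de la Pe\~na--Gin\'e, McConnell--Taqqu):
\[
\E\bigl\|\textstyle\sum_j a_j(Y_j\otimes\bar Y_j - \E[Y_j\otimes\bar Y_j])\bigr\|_q^p \;\le\; C^p\,\E\bigl\|\textstyle\sum_j a_j\,Y_j\otimes\bar Y_j'\bigr\|_q^p,
\]
with an absolute constant $C$. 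Composing the two bounds yields the lemma with universal constant $C/\alpha_N$ (bounded uniformly in $N$); the matricial version is automatic since every step is linear in the $a_j$ and compatible with tensoring by $M_k$.

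The main obstacle I anticipate lies in this final decoupling step: one must verify that the constant $C$ is uniform in $p$, $q$, $N$ and the coefficients $(a_j)$. For $1<q<\infty$ this follows from the UMD property of the Schatten class $S_q$ combined with the scalar Gaussian decoupling; the endpoints $q\in\{1,\infty\}$ can be handled either by duality/interpolation or by a direct chaining argument in the operator norm.
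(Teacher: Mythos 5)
Your overall route coincides with the paper's: polar decomposition $Y_j=U_j|Y_j|$, Schur's lemma to identify $\E(|Y|\,\cdot\,|Y|)$ as $\alpha_N(I-P)+P$, conditional expectation plus Jensen to pass from the unitary sum to the coupled Gaussian sum at the cost of $\alpha_N^{-p}$, and then decoupling. However, there is a genuine gap at the single quantitative point on which the whole lemma rests: you assert that $\alpha_N>0$ \emph{uniformly} in $N$, but the justification you give, namely $\alpha_N\|\xi\|_2^2=\E\|\,|Y_j|^{1/2}\xi|Y_j|^{1/2}\|_2^2>0$ by almost sure invertibility of $Y_j$, only proves strict positivity for each fixed $N$. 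With only that, your final constant is $C/\alpha_N$ with no control as $N\to\infty$, and the lemma (whose entire content is that $C$ is absolute) is not established. A uniform lower bound on $\alpha_N$ genuinely requires random-matrix input: the paper computes $\alpha_N=\E(|Y|_{ii}|Y|_{jj})$ for $i\ne j$, deduces $N(N-1)\alpha_N\ge \E(\tr|Y|)^2-N$, and then uses the nontrivial fact that $N^{-1}\E\tr|Y|=b_N$ is bounded below uniformly in $N$ (Marchenko--Pastur/Wigner asymptotics, giving $\liminf_N\alpha_N\ge(\tau|c_1|)^2$). Some such estimate on the singular value distribution of $Y$ is unavoidable; your spectral-gap formula for $\alpha_N$ does not yield it without further work.

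A secondary remark: the step you flag as the main obstacle, the decoupling $\E\|\sum_j a_j(Y_j\otimes\bar Y_j-\E)\|_q^p\le C^p\E\|\sum_j a_jY_j\otimes\bar Y_j'\|_q^p$, is actually the easy part and needs neither UMD nor interpolation nor any property of the Schatten classes. The elementary Gaussian rotation trick used in the paper does it in any normed space with constant $2$: since $(Y_j,Y_j')$ and $((Y_j+Y_j')/\sqrt2,(Y_j-Y_j')/\sqrt2)$ are equidistributed, one writes the centered sum as $\E_Y$ of the difference of the two coupled sums, applies Jensen, rotates to obtain $\sum_j a_j(Y_j\otimes\bar Y_j'+Y_j'\otimes\bar Y_j)$, and finishes with the triangle inequality and symmetry. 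I would replace your appeal to de la Pe\~na--Gin\'e by this two-line argument, and concentrate your effort on the uniform lower bound for $\alpha_N$, which is where the real content lies.
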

           \begin{proof} We assume that all three sequences $(U_j)$, $(Y_j)$ and $(Y'_j)$ are mutually independent.
           The proof is based on the well known fact that the sequence $(Y_j)$ has the same distribution
           as $U_j|Y_j|$, or equivalently that the two factors
           in the polar decomposition $Y_j=U_j|Y_j|$ of $Y_j$ are mutually independent. Let $\cl E$  denote the conditional expectation operator
           with respect to the $\sigma$-algebra generated by $(U_j)$. Then we have
           $U_j \E|Y_j|= {\cl E}( U_j|Y_j|)={\cl E}( Y_j)$, and moreover
           $$(U_j\otimes \bar U_j)  \E(|Y_j| \otimes \o{|Y_j|})={\cl E}( U_j|Y_j| \otimes \o{U_j|Y_j|})={\cl E}(Y_j \otimes \o{Y_j}) .$$
           Let
           $$T=\E(|Y_j| \otimes \o{|Y_j|})=\E(|Y| \otimes \o{|Y|}).$$
           Then we have
           $$ \sum a_j (U_j\otimes \bar U_j) T (I-P) ={\cl E}((  \sum a_j Y_j \otimes \o{Y_j})(I-P)).$$
           Note that by rotational invariance of the Gaussian measure
           we have
           $(U \otimes \bar U)T(U^* \otimes \bar U^*)=  T$. Indeed   since
           $UYU^*$ and $Y$ have the same distribution
           it follows that
           also $UYU^* \otimes \o{UYU^*} $ and $Y\otimes \bar Y$  have the same distribution,
           and hence so do their modulus.\\
           Viewing $T$ as a linear map on $S_2^N=\ell_2^N \otimes \o{\ell_2^N} $,
           this yields
           $$\forall U\in U(N)\quad T (U\xi U^*)= UT (\xi )U^*.$$
          Representation theory 
           shows that $T$ must be simply    a linear combination of $P$ and $I-P$.
           Indeed, the unitary representation $U\mapsto U\otimes \bar U$ 
           on $U(N)$ decomposes
           into exactly two distinct irreducibles, by restricting either
           to the subspace  $\CC I$ or its orthogonal. Thus, by Schur's Lemma we know a priori
           that there are two scalars $\chi'_N ,\chi_N$ such that $T=\chi'_N P+ \chi_N (I-P)$.
           We may also observe $\E( |Y|^2)=I$ so  that $T(I)=I$ and hence $\chi'_N=1$, therefore
           $$T=P+\chi_N (I-P).$$
           Moreover, since $T(I)=I$ and $T$ is self-adjoint, $T$ commutes with $P$ and  hence $T(I-P)=(I-P)T$, 
            so that we have
           \begin{equation}\label{37}  \sum_1^n  a_j (U_j \otimes \bar U_j )(1-P) T = {\cl E} \sum_1^n a_j (Y_j \otimes \bar Y_j )(I-P).\end{equation}
           We claim that $T$ is invertible and that
           there is an absolute constant $C$ so that
           $$\|T^{-1}\| ={\chi_N}^{-1}\le C.$$
           From this and \eqref{37}  follows immediately that for any $p\ge 1$
          \begin{equation}\label{36} \E\|\sum_1^n  a_j (U_j \otimes \bar U_j) (1-P)\|_q^p \le C^p  \E\| \sum_1^n a_j (Y_j \otimes \bar Y_j) (1-P)  \|_q^p.\end{equation}
        
          To check the claim it suffices to compute $\chi_N$. For $i\not=j$ we have a priori
            $T (e_{ij})= e_{ij} \langle T(e_{ij}), e_{ij}\rangle$ but (since $\tr (e_{ij}  )=0$) we know $T (e_{ij})=\chi_N e_{ij}$.
            Therefore for any $i\not=j$ we have $\chi_N = \langle T(e_{ij}), e_{ij}\rangle$,
                       and the latter we can compute
                     $$\langle T(e_{ij}), e_{ij}\rangle =\E\tr (|Y|e_{ij} |Y|^*e^*_{ij})= \E( |Y|_{ii} |Y|_{jj}).$$
         Therefore, 
         $$N(N-1)  \chi_N= \sum_{i\not=j}  \E( |Y|_{ii} |Y|_{jj})=\sum_{i,j}  \E( |Y|_{ii} |Y|_{jj})-\sum_j  \E(  |Y|^2_{jj})=\E(\tr|Y |)^2 -N \E(  |Y|^2_{11}).$$
         Note that $\E(  |Y|^2_{11})= \E  \langle |Y| e_1,e_1\rangle ^2 \le   \E \langle |Y|^2 e_1,e_1\rangle =
         \E\|Y (e_1)\|^2_2=1$, and hence
         $$N(N-1)  \chi_N= \sum_{i\not=j}  \E( |Y|_{ii} |Y|_{jj})\ge \E(\tr|Y |)^2 -N  .$$
         Now it is well known that $E|Y| =b_N I$ where $b_N$ is determined by
         $b_N=N^{-1}\E \tr |Y|= N^{-1}\|Y\|_1$ and $\inf_N b_N >0$ (see e.g. \cite[p.80]{MP}). Actually, by Wigner's limit theorem,
         when $N \to \infty$, $N^{-1}\|Y\|_1$ tends almost surely to $\tau|c_1|$.
         Therefore, $ N^{-2}\E(\tr|Y |)^2 $ tends to $(\tau|c_1|)^2$.  
         We have
         $$  \chi_N=(N(N-1))^{-1} \sum_{i\not=j}  \E( |Y|_{ii} |Y|_{jj})\ge (N(N-1))^{-1} \E(\tr|Y |)^2 -(N-1)^{-1}  ,$$
         and this implies
           $$\liminf_{N\to \infty}   \chi_N \ge (\tau|c_1|)^2.$$
 In any case, we have
         $$\inf_N \chi_N >0,$$
         proving our claim.

                    We will now deduce from \eqref{36} the desired estimate by a classical decoupling
                    argument for multilinear expressions in Gaussian variables.\\
                    We first observe $\E ((Y \otimes \bar Y) (I-P))=0$. Indeed, 
                    by orthogonality, a simple calculation shows that  $\E (Y \otimes \bar Y)=\sum _{ij} \E (Y_{ij}\o{Y_{ij}})e_{ij}\otimes \o{e_{ij}}= \sum _{ij} N^{-1}e_{ij}\otimes \o{e_{ij}}=P $,
                    and hence $\E ((Y \otimes \bar Y) (I-P))=0$.
                    
                    We will use 
                    $$(Y_j, Y'_j ) {\buildrel {dist} \over {=}}( (Y_j+Y'_j )/\sqrt 2,  (Y_j-Y'_j )/\sqrt 2)$$
                    and
                    if $\E_Y$ denotes the conditional expectation with respect to
                    $Y$ we have (recall $\E (Y_j \otimes \bar Y_j )(I-P)=0$)
                    $$\sum_1^n a_j Y_j \otimes \bar Y_j (I-P)= \E_Y (\sum_1^n a_j Y_j \otimes \bar Y_j (I-P)-
                    \sum_1^n a_j Y'_j \otimes \bar Y'_j(I-P)).$$
                  Therefore    
                   $$ \E\|\sum_1^n  a_j Y_j \otimes \bar Y_j (1-P)\|_q^p \le
                   \E\|\sum_1^n  a_j Y_j \otimes \bar Y_j (1-P)- \sum_1^n a_j Y'_j \otimes \bar Y'_j(I-P))\|_q^p   $$
                    
                   $$=  \E\|\sum_1^n  a_j (Y_j+Y'_j )/\sqrt 2 \otimes \o{ (Y_j+Y'_j )/\sqrt 2} (1-P)- \sum_1^n a_j (Y_j-Y'_j )/\sqrt 2 \otimes \o{ (Y_j-Y'_j )/\sqrt 2}(I-P))\|_q^p $$                   
     $$= \E\|\sum_1^n  a_j  (Y_j    \otimes \o{Y'_j }+Y'_j    \otimes \o{Y_j }  ) (1-P) \|_q^p
           $$
           and hence by the triangle inequality
           $$\le 2^p \E\|\sum_1^n  a_j  (Y_j    \otimes \o{Y'_j }   ) (1-P) \|_q^p.$$
Thus we conclude a fortiori
$$\E\|\sum_1^n  a_j U_j \otimes \bar U_j (1-P)\|_q^p \le (2C)^p \E\|\sum_1^n  a_j  (Y_j    \otimes \o{Y'_j }   )  \|_q^p.$$
 \end{proof}
    \begin{thm} Let $C$ be as in the preceding Lemma. Let 
$$\hat S^{(N)}= \sum_1^n  a_j U_j \otimes \bar U_j (1-P).$$
Then 
 \begin{equation}\label{a4}\limsup_{N\to \infty}\E \|\hat S^{(N)} \| \le   4C ( \sum |a_j|^2)^{1/2}. \end{equation}
Moreover we have 
almost surely
 \begin{equation}\label{a5} \limsup_{N\to \infty} \|\hat S^{(N)} \| \le   4C ( \sum |a_j|^2)^{1/2}. \end{equation}
     \end{thm}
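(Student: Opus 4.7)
The plan is to combine the preceding Lemma with an asymptotic bound on the Gaussian operator $\sum_j a_j Y_j\otimes\bar Y'_j$, obtained via strong convergence to its free-probability limit.

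For \eqref{a4}, apply the Lemma with $p=1$ and $q=\infty$:
\[
\E\|\hat S^{(N)}\| \le C\,\E\,\bigl\|\textstyle\sum_j a_j Y_j\otimes\bar Y'_j\bigr\|_{B(S_2^N)}.
\]
It therefore suffices to show $\limsup_N\E\|\sum_j a_j Y_j\otimes\bar Y'_j\|_{B(S_2^N)}\le 4(\sum|a_j|^2)^{1/2}$. For this, I would invoke the strong-convergence theorem of Haagerup--Thorbj\o rnsen (in the form extended by Collins--Male to allow matrix coefficients): as $N\to\infty$, the random operator norm converges a.s.\ (and, by Gaussian concentration giving uniform integrability, in $L^1$) to $\|\sum_j a_j c_j\otimes\bar c'_j\|$, where $(c_j)$ and $(c'_j)$ are two $*$-free families of free circular elements with $(c_j)$ mutually $*$-free from $(c'_j)$.

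To bound the free-limit norm, I would use Voiculescu's inequality for free creation operators (equivalently, for free circular sums with coefficients in a commuting tensor factor):
\[
\bigl\|\textstyle\sum_j c_j\otimes B_j\bigr\|\le \bigl\|\textstyle\sum B_j^*B_j\bigr\|^{1/2}+\bigl\|\textstyle\sum B_jB_j^*\bigr\|^{1/2}.
\]
This is obtained from the Fock-space realization $c_j=\ell_j+r_j^*$ together with $\|\sum\ell_j\otimes B_j\|=\|\sum B_j^*B_j\|^{1/2}$. Setting $B_j=a_j\bar c'_j$, the triangle inequality for sums of positive operators combined with $\|c'_j\|=2$ gives $\|\sum|a_j|^2\bar c'_j(\bar c'_j)^*\|\le 4\sum|a_j|^2$ and similarly $\|\sum|a_j|^2(\bar c'_j)^*\bar c'_j\|\le 4\sum|a_j|^2$. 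Taking square roots and summing yields $\|\sum_j a_j c_j\otimes\bar c'_j\|\le 4(\sum|a_j|^2)^{1/2}$, completing \eqref{a4}.

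For the almost-sure bound \eqref{a5}, I would apply the Lemma at a growing moment $p=p_N\sim\log N$, then combine Chebyshev's inequality with a standard Borel--Cantelli argument. The required estimate $(\E\|\sum a_j Y_j\otimes\bar Y'_j\|^{p_N})^{1/p_N}\le 4(\sum|a_j|^2)^{1/2}(1+o(1))$ follows from Gaussian hypercontractivity and concentration, working on the high-probability event $\{\max_j\|Y_j\|,\,\max_j\|Y'_j\|\le 3\}$ where the operator norm is Lipschitz with constant $O(\sum|a_j|)$ in the Hilbert--Schmidt metric.

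The main obstacle is closing the gap between the Lemma's bound and the sharp constant $4$: naive triangle or Cauchy--Schwarz estimates on $\E\|\sum a_j Y_j\otimes\bar Y'_j\|$ give $O(\sum|a_j|)$, which is off by a factor $\sqrt n$ when all coefficients are equal. The optimal constant only emerges in the free-probability limit, where the operator-valued Khintchine inequality has no logarithmic loss. The substantive input beyond the Lemma is therefore the Haagerup--Thorbj\o rnsen / Collins--Male strong-convergence theorem.
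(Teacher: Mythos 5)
Your plan is sound in outline and reaches the right constant, but it takes a genuinely different route from the paper, and the key external input you invoke is not quite available in the form you cite. The paper never passes to the free limit: it applies the Lemma at the level of the Schatten $p$-norms ($q=p$, $p$ an even integer), invokes the Haagerup--Thorbj{\o}rnsen moment inequality \cite[(3.1)]{Pq} \emph{twice} (conditioning on one Gaussian leg and treating the other as operator coefficients) to get $\E\tr|\sum a_jY_j\otimes\bar Y_j'|^p\le(\E\tr|Y|^p)^2(\sum|a_j|^2)^{p/2}$, crudely bounds $\E\tr|Y|^p\le N\,\E\|Y\|^p$, and then kills the resulting $N^2$ prefactor by taking $p\sim 2\vp^{-1}\log N$ together with the concentration estimate $(\E\|Y\|^p)^{1/p}\le 2+\vp(N)+\beta'\sqrt{p/N}$; the factor $4$ arises as $(\lim\|Y\|)^2=2^2$, not from Voiculescu's inequality. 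Your Borel--Cantelli step for \eqref{a5} is essentially the paper's. The advantage of the paper's moment route, as it states explicitly, is that it transfers verbatim to matrix coefficients $a_j\in M_k$ (Remark \ref{sim1}); your free-probabilistic route is cleaner conceptually but buys the constant only in the limit and needs more care to generalize.

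The concrete gap is your appeal to ``Haagerup--Thorbj{\o}rnsen as extended by Collins--Male'' for the strong convergence of $\sum_ja_jY_j\otimes\bar Y_j'$. Collins--Male allow a \emph{deterministic} strongly convergent family alongside the random one, acting in the \emph{same} matrix algebra; here the coefficients $a_j\bar Y_j'$ of the first leg live in an independent tensor factor whose dimension grows with $N$, which is precisely the (much harder) question of strong convergence of tensor products of independent random matrix families, not covered by those references. For this degree-$(1,1)$ expression the upper bound can be salvaged: condition on $(Y_j')$, note that a.s.\ $\max_j\|Y_j'\|\to 2$, and apply the operator-coefficient form of the Haagerup--Thorbj{\o}rnsen estimate, $\E_Y\|\sum_jY_j\otimes b_j\|\le(1+o(1))\bigl(\|\sum b_j^*b_j\|^{1/2}+\|\sum b_jb_j^*\|^{1/2}\bigr)$ with $b_j=a_j\bar Y_j'$ --- but that is exactly the two-step moment argument the paper runs, so you should either cite that inequality directly or rewrite the step as a conditioning argument rather than as a tensor-product strong convergence theorem. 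Your Voiculescu/Fock-space computation of the free-limit norm, $\|\sum a_jc_j\otimes\bar c_j'\|\le 4(\sum|a_j|^2)^{1/2}$ using $\|c_j'\|=2$, is correct, and the truncation-plus-concentration control of $(\E\|\cdot\|^{p_N})^{1/p_N}$ for $p_N\sim\log N$ is standard, though you should verify that the contribution of the complement of the event $\{\max_j\|Y_j\|,\max_j\|Y_j'\|\le 3\}$ to the $p_N$-th moment is negligible.
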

   \begin{proof} A very direct argument is indicated in Remark \ref{sim2} below, but we prefer
   to base the proof  on \cite{HT2} in the style of    \cite{Pq} in order to make clear that it remains valid
   with matrix coefficients.
By    \cite[(3.1)]{Pq} applied twice (for $k=1$) (see also   Remark 3.5 in \cite{Pq})
one finds for 
any even integer $p$
 \begin{equation}\label{a1}
E\tr |\sum_1^n  a_j  (Y_j    \otimes \o{Y'_j }   ) |^p  \le (\E \tr|Y|^p)^2 ( \sum |a_j|^2)^{p/2}
 \end{equation}
Therefore by the preceding Lemma
 $$E\tr |\hat S^{(N)} |^p  \le C^p (\E \tr|Y|^p)^2 ( \sum |a_j|^2)^{p/2}  ,$$
and hence a fortiori
$$E\|\hat S^{(N)} \|^p\le  N^2 C^p (\E \|Y\|^p)^2 ( \sum |a_j|^2)^{p/2} .$$
We then complete the proof, as in \cite{Pq}, using only the concentration
of the variable $ \|Y\|$. We have an absolute constant $\beta'$  and   $\vp(N)>0$ 
tending to zero when $N\to \infty$, such that
$$(\E \|Y\|^p)^{1/p}  \le 2+\vp(N) + \beta' \sqrt{p/N},  $$
and hence
$$(E\|\hat S^{(N)} \|^p) ^{1/p} \le N^{2/p}C (2+\vp(N) + \beta' \sqrt{p/N})^2 ( \sum |a_j|^2)^{1/2} .
$$
Fix $\vp>0$ and  choose $p$ so that $N^{2/p} = \exp \vp $, i.e. $p=  2\vp^{-1} \log N$
(note that this is $\ge 2$ when $N$ is large enough)
  we obtain
  $$E\|\hat S^{(N)} \|  \le (E\|\hat S^{(N)} \|^p) ^{1/p} \le 4 e^\vp C(1+\vp^{-1} \vp'(N) ) ( \sum |a_j|^2)^{1/2}$$
  where $\vp'(N)\to 0$ when $N\to \infty$, and \eqref{a4} follows.\\ Let $R_N=4C(1+\vp^{-1} \vp'(N) ) ( \sum |a_j|^2)^{1/2}$.
 By Tshebyshev's inequality $(E\|\hat S^{(N)} \|^p) ^{1/p} \le e^\vp R_N$ implies
  $$\P\{ \|\hat S^{(N)} \| > e^{2\vp} R_N\}\le     \exp -\vp p=N^2.$$
  From this it is immediate  that almost surely
 $$ \limsup_{N\to \infty} \|\hat S^{(N)} \| \le e^{2\vp} 4 C( \sum |a_j|^2)^{1/2}$$
 and hence \eqref{a5} follows.
    \end{proof}
   
    \begin{rem}\label{sim1}  The same argument can be applied when $a_j\in M_k$ for any integer $k>1$. 
    Then we find
    $$\limsup_{N\to \infty}\E \| \sum_1^n  a_j \otimes U_j \otimes \bar U_j (1-P) \| \le   4C
  \max\{ \|\sum a_j^*a_j\|^{1/2}, \|\sum a_ja_j^*\|^{1/2}   \} .$$
Moreover we have 
almost surely
 $$\limsup_{N\to \infty} \| \sum_1^n  a_j \otimes U_j \otimes \bar U_j (1-P) \| \le   4C
  \max\{ \|\sum a_j^*a_j\|^{1/2}, \|\sum a_ja_j^*\|^{1/2}   \} .$$   
    \end{rem}
        \begin{rem}\label{sim2} In the case of scalar coefficients $a_j$
        the proof extends also to double sums
        of the form
        $$\sum _{ij} a_{ij} U_i \otimes \bar U_j (I-P).$$
  We refer the reader to  \cite[Theorem 16.6]{Pg} for a self-contained proof of \eqref{a1}
  for such double sums.
        
 \end{rem}
    \n\textbf{Acknowledgment.}  I am   grateful to S. Szarek for a useful suggestion.

  \end{document}